\newtheorem{proposition}{Proposition}[section]
\newenvironment{proof}{\smallskip\noindent\emph{Proof.}\hspace{1pt}}%
{\hspace{-5pt}{\nobreak\quad\nobreak\hfill\nobreak$\square$\vspace{8pt}%
\par}\smallskip\goodbreak}
\newcommand{\Section}[1]{\section{#1}\setcounter{equation}{0}}
\newcommand{\C}[1]{\mathbf{C^{#1}}}
\newcommand{\reali}{{\mathbb{R}}}
\newcommand{\interi}{{\mathbb{Z}}}
\newcommand{\naturali}{{\mathbb{N}}}
\renewcommand{\epsilon}{\varepsilon}
\renewcommand{\phi}{\varphi}
\renewcommand{\theta}{\vartheta}
\newcommand{\vmax}{V_{\max}}
\begin{document}

\title{The Godunov Method for a $2$--Phase Model}

\author{M. Garavello\thanks{Dipartimento di Matematica e Applicazioni,
    Universit\`a di Milano Bicocca, Via R. Cozzi 55, 20125 Milano, Italy.
    E-mail: \texttt{mauro.garavello@unimib.it}} \and
  F. Marcellini\thanks{Dipartimento di Matematica e Applicazioni,
    Universit\`a di Milano Bicocca, Via R. Cozzi 55, 20125 Milano, Italy.
    E-mail: \texttt{francesca.marcellini@unimib.it}}}

\maketitle

\begin{abstract}
  We consider the Godunov numerical method to the phase-transition 
  traffic model, proposed in~\cite{ColomboMarcelliniRascle},
  by Colombo, Marcellini, and Rascle. Numerical tests are shown
  to prove the validity of the method.
  Moreover we highlight the differences between
  such model and the one proposed in~\cite{BlandinWorkGoatinPiccoliBayen},
  by Blandin, Work, Goatin, Piccoli, and Bayen. 

  \noindent\textit{2000~Mathematics Subject Classification:} 35L65,
  90B20

  \medskip

  \noindent\textit{Key words and phrases:} $2$--Phase Model, Continuum Traffic Models, Godunov Scheme, Hyperbolic Systems of Conservation Laws. 

\end{abstract}

\Section{Introduction}
\label{sec:Intro}

Aim of this paper is to present the Godunov method for approximating
the solutions to the $2$--phase traffic model, 
introduced in~\cite{ColomboMarcelliniRascle} by Colombo, Marcellini, and
Rascle. The model consists on the system of conservation laws
\begin{equation}
  \label{eq:Modeleta}
  \left\{
    \begin{array}{l}
      \partial_t \rho +
      \partial_x \left( \rho\, v (\rho,\eta) \right) = 0
      \vspace{.2cm}\\
      \partial_t \eta +
      \partial_x \left( \eta\, v (\rho, \eta) \right) = 0
    \end{array}
  \right.
  \quad \mbox{with} \quad
  v(\rho, \eta)
  =
  \min \left\{ \vmax, \frac{\eta}{\rho}\, \psi(\rho) \right\},
\end{equation}
where $\rho = \rho(t,x)$ is the car density at time $t>0$
and at position $x \in \reali$, 
$\eta = \eta(t,x)$ is a generalized
momentum, $v = v(\rho, \eta)$ is the avarage speed of cars,
$\vmax$ is a positive constant describing the maximum speed of cars,
and $\psi = \psi(\rho)$ is a given decreasing function.
This model is an extension of the famous
Lighthill-Whitham-Richards (LWR) model, see~\cite{LighthillWhitham, Richards},
and it is obtained by assuming that different kinds of drivers
have different maximal speeds $w = w\left(\rho, \eta\right)$, where
$w=\eta / \rho \in \left[ \check{w}, \hat{w} \right]$,
for suitable constants $0 < \check{w} < \hat{w}$.

The model described by~(\ref{eq:Modeleta}) belongs to the class of hyperbolic
phase transition models for traffic, whose aim is to describe
the different traffic regimes; see~\cite{kerner2012physics}.
The pioneer work in this class is the model proposed in 2002
by Colombo; see~\cite{Colombo1.5, MR2032809, ColomboGoatinPriuli}.
Its distinctive feature is that the two phases are disconnected. 
Subsequently other phase transition models have been introduced
in the literature;
see~\cite{BlandinWorkGoatinPiccoliBayen, ColomboMarcelliniRascle,
  Goatin2Phases, LebacqueMammarHajSalem, Marcellini}.
In particular, the models
in~\cite{BlandinWorkGoatinPiccoliBayen, ColomboMarcelliniRascle},
although very similar in the fundamental diagram, are indeed different,
since solutions to Riemann problems have different structures and
contain a different number of waves.
Also the derivation of the two models is completely different:
the construction in~\cite{BlandinWorkGoatinPiccoliBayen} is done imposing
a priori two phases, the free and the congested one,
while in~\cite{ColomboMarcelliniRascle} the two phases are obtained
as a consequence of the speed limit $\vmax$.

In the present paper, we describe the Godunov numerical scheme
for system~(\ref{eq:Modeleta}).
The Godunov method is a finite volume method for conservation laws;
it is based on the solution to the Riemann problem in order to
approximate the flux between two contiguous cells;
see~\cite{MR0119433, MR1925043}.
In our setting, the fundamental diagram in the conserved variable is
convex; hence the application of this method is straightforward.
Instead the nonconvexity of the fundamental diagram for the model
in~\cite{BlandinWorkGoatinPiccoliBayen} is a source of various problems
for the Godunov method; see~\cite{MR2453129, MR1378558}.

The paper is organized as follow.
In Section~\ref{sec:Des} we describe the 2-Phase
Traffic Model~(\ref{eq:Modeleta}). 
In Section~\ref{sec:Comparison} we make a comparison between
the $2$--Phase model~(\ref{eq:Modeleta}) and that
in~\cite{BlandinWorkGoatinPiccoliBayen}.
Finally in Section~\ref{sec:Godunov}, we describe the Godunov method
to the model~(\ref{eq:Modeleta}).
Some numerical integrations conclude the paper.

\Section{Notations and Description of the Phase Transition Model}
\label{sec:Des}

In this section we fix notations and we recall some properties concerning the $2$--Phase traffic model introduced in~\cite{ColomboMarcelliniRascle}. As already said, the model~(\ref{eq:Modeleta}) is an extension of the classical LWR model, given by the following scalar conservation law
\begin{equation*}
  %\label{eq:LWR}
  \partial_t \rho + \partial_x \left( \rho \, V \right) =0 \,,
\end{equation*}
where $\rho$ is the traffic density and $V=V(t,x,\rho)$ is the speed. 

We consider the folllowing two assumptions on the speed:
\begin{itemize}
\item We assume that, at a given density, different drivers may choose different velocities, that is, we assume that $V = w \, \psi(\rho)$, where $\psi = \psi(\rho)$ is a $\C{2}$ function and $w = w(t,x)$ is the maximal speed of a driver, located at position $x$ at time $t$.
\item We impose an overall bound on the speed $\vmax$.
\end{itemize}
We get the following $2 \times 2$ system
\begin{equation}
  \label{eq:rhow}
  \left\{
    \begin{array}{l}
      \partial_t \rho + \partial_x ( \rho v ) =0
      \vspace{.2cm}\\
      \partial_t w + v \, \partial_x w =0
    \end{array}
  \right.
  \qquad \mbox{ with } \qquad
  v = \min \left\{\vmax ,\, w \, \psi(\rho) \right\}\,,
\end{equation}
and then the model in~(\ref{eq:Modeleta}), with the change of variables $\eta = \rho w$.

As a consequence of the introduction of the speed bound $\vmax$, system~(\ref{eq:Modeleta}) produces two phases, the free and the congested one, described by the sets
\begin{eqnarray}
  \label{eq:phF}
  F
  & = &
  \left\{
    (\rho,w) \in [0,R] \times [\check w, \hat w]
    \colon v(\rho, \rho w) = \vmax
  \right\},
  \\
  \label{eq:phC}
  C
  & = &
  \left\{
    (\rho,w) \in [0,R] \times [\check w, \hat w]
    \colon v(\rho, \rho w) = w \, \psi(\rho)
  \right\}\,.
\end{eqnarray}
In Figure~\ref{fig:phases} it is represented the fundalmental digram in the coordinates $(\rho,\eta)$ and $(\rho, \rho v)$. Note that $F$ and $C$ are closed sets and $F \cap C \neq \emptyset$. Note also that $F$ is one-dimensional in the $(\rho,  \rho v)$ plane of the fundamental diagram, while it is two-dimensional in the $(\rho,\eta)$ coordinates. 
\begin{figure}[t!]
\centering
\input{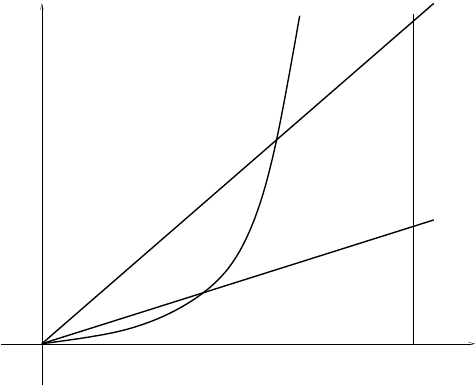tex_t}
\hfil
\input{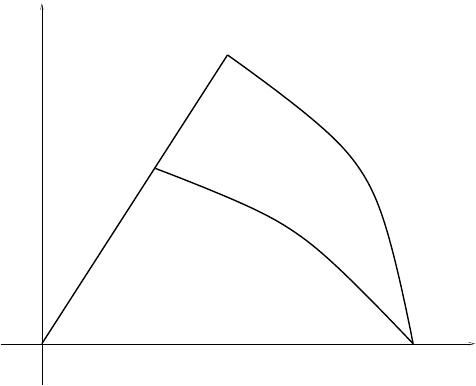tex_t}
\caption{The free phase $F$ and the congested phase $C$ resulting
from\/ {\rm(\ref{eq:Modeleta})} in the coordinates, from left to right, $(\rho,\eta)$ and $(\rho, \rho v)$.}\label{fig:phases}
\end{figure}  

We recall the following assumptions:
\begin{description}

\item[(H-1)] $R,\check w, \hat w, \vmax$ are positive
  constants, with $\vmax < \check w < \hat w$.

\item[(H-2)]$\psi \in \C{2} \left( [0,R];[0,1]\right)$ is such
that\[
    \begin{array}{@{}rcl@{\qquad}rcl@{}}
      \psi(0) \!\!&\!\! = \!\!&\!\! 1,
      \!\!&\!\!
      \psi(R) \!\!&\!\! =\!\!&\!\! 0,
      \\
      \psi'(\rho) \!\!&\!\! \leq \!\!&\!\! 0,
      \!\!&\!\!
      \displaystyle
      \frac{d^2\ }{d\rho^2} \left( \rho\, \psi(\rho) \right) \!\!&\!\! \leq \!\!&\!\!
      0
      \quad \mbox{ for all } \rho \in [0, R]\,.
    \end{array}
\]
\item[(H-3)] Waves of the first family in the congested phase $C$ have negative speed.
\end{description}

We recall the eigenvalues, right eigenvectors, and Lax curves $\eta=\mathcal{L}_i(\rho;\rho_o,\eta_o)$ 
in~$C$:\[
    \begin{array}{@{}rcl@{\quad}rcl@{}}
      \lambda_{1} (\rho, \eta)
      \!\!&\!\! = \!\!&\!\!
      \eta\, \psi'(\rho) + v(\rho, \eta),
      &
      \lambda_{2} (\rho, \eta)
      \!\!&\!\! = \!\!&\!\!
      v(\rho, \eta),
      \\[5pt]
      r_{1} (\rho, \eta)
      \!\!&\!\! = \!\!&\!\!
      \left[
        \begin{array}{c}
          -\rho
          \\
          -\eta
        \end{array}
      \right],
      &
      r_{2} (\rho, \eta)
      \!\!&\!\! = \!\!&\!\!
      \left[
        \begin{array}{c}
          1
          \\
          \eta\left( \frac{1}{\rho}-\frac{\psi'(\rho) }{\psi(\rho) }\right)
        \end{array}
      \right],
      \\
      \nabla \lambda_1 \cdot r_1
      \!\!&\!\! = \!\!&\!\!
      \displaystyle
      -\frac{d^2\ }{d\rho^2} \left[ \rho\, \psi(\rho) \right],
      &
      \nabla \lambda_2 \cdot r_2
      \!\!&\!\! = \!\!&\!\!
      0,
      \\
      \mathcal{L}_1(\rho;\rho_o,\eta_o)
      \!\!&\!\! = \!\!&\!\!
      \displaystyle
      \eta_o \frac{\rho}{\rho_o},
      &
      \mathcal{L}_2(\rho;\rho_o,\eta_o)
      \!\!&\!\! = \!\!&\!\!
      \displaystyle
      \frac{\rho \, v(\rho_o, \eta_o)}{\psi(\rho)},
      \; \rho_o < R. 
\end{array}
\]
When $\rho_o = R$, the 2-Lax curve through $(\rho_o, \eta_o)$ is
  the segment $\rho=R$, $\eta \in [R \check w, R \hat w]$.
  
In order to apply th Godunov's method in Section~\ref{sec:Godunov}, we recall the description of the solutions of the Riemann problem for the model~(\ref{eq:Modeleta}). First, we recall all the possible waves in the solution.
\begin{itemize}
\item \textsl{A Linear wave} is a wave connecting two states in the free
  phase.

\item \textsl{A Phase Transition Wave} is a wave connecting a left state
  $\left(\rho_l, \eta_l\right) \in F$ with a right state
  $\left(\rho_r, \eta_r\right) \in C$ satisfying 
  $\frac{\eta_l}{\rho_l} = \frac{\eta_r}{\rho_r}$.

\item \textsl{A Wave of the First Family} is a wave connecting a left state 
  $\left(\rho_l, \eta_l\right) \in C$ with a right state
  $\left(\rho_r, \eta_r\right) \in C$ such that
  $\frac{\eta_l}{\rho_l} = \frac{\eta_r}{\rho_r}$.

\item \textsl{A Wave of the Second Family} is a wave connecting a left state 
  $\left(\rho_l, \eta_l\right) \in C$ with a right state
  $\left(\rho_r, \eta_r\right) \in C$ such that
  $v\left(\rho_l, \eta_l\right) = v\left(\rho_r, \eta_r\right)$.
\end{itemize}    
  
\subsection{The Riemann Problem}
\label{sse:RP}
Under the assumptions \textbf{(H-1)}, \textbf{(H-2)} and \textbf{(H-3)}, for all states $(\rho^l,\eta^l)$,
  $(\rho^r, \eta^r) \in F \cup C$, the Riemann problem consisting
  of~(\ref{eq:Modeleta}) with initial data
  \begin{equation}
    \label{eq:RD}
    \rho(0,x) = \left\{
      \begin{array}{l@{\quad\mbox{ if }}rcl}
        \rho^l & x & < & 0
        \\
        \rho^r & x & > & 0
      \end{array}
    \right.
    \qquad
    \eta(0,x) = \left\{
      \begin{array}{l@{\quad\mbox{ if }}rcl}
        \eta^l & x & < & 0
        \\
        \eta^r & x & > & 0
      \end{array}
    \right.
  \end{equation}
  admits a unique self similar weak solution $(\rho,\eta) =
  (\rho,\eta) (t,x)$ constructed as follows:
  \begin{enumerate}[(1)]
  \item If $(\rho^l,\eta^l), (\rho^r,\eta^r) \in F$, then the solution consists of a linear wave separating $(\rho^l,\eta^l)$ from $(\rho^r,\eta^r)$.  
  \item If $(\rho^l,\eta^l), (\rho^r,\eta^r) \in C$, then the solution consists of a wave of the first family (shock or rarefaction) between $(\rho^l, \eta^l)$ and a middle state $(\rho^m, \eta^m)$, followed by a wave of the second family between $(\rho^m, \eta^m)$ and $(\rho^r, \eta^r)$.

  \item If $(\rho^l,\eta^l) \in C$ and $(\rho^r,\eta^r) \in F$,
    then the solution consists of a wave of the first family separating
    $(\rho^l, \eta^l)$ from a middle state $(\rho_\circ, \eta_\circ)$
    (which belongs to the intersection between $F$ and $C$)
    and by a linear wave separating $(\rho_\circ, \eta_\circ)$
    from $(\rho^r,\eta^r)$.
  \item If $(\rho^l,\eta^l) \in F$ and $(\rho^r,\eta^r) \in C$, then the solution consists of a phase transition wave between $(\rho^l, \eta^l)$ and a middle state $(\rho^m, \eta^m)$ (which is in $C$), followed by a wave of the second family between $(\rho^m, \eta^m)$ and $(\rho^r, \eta^r)$. 
\end{enumerate}
      
%
%
% Comparison
%
%
\Section{Comparison with the Model in~\cite{BlandinWorkGoatinPiccoliBayen}}
\label{sec:Comparison}

In this section we briefly describe the $2$--Phase traffic model,
introduced by Blandin, Work, Goatin, Piccoli and Bayen
in~\cite{BlandinWorkGoatinPiccoliBayen}, and we show the main differences
with~(\ref{eq:Modeleta}).

The phase transition traffic model in~\cite{BlandinWorkGoatinPiccoliBayen}
has been developed as an extension of the Colombo phase transition
model~\cite{Colombo1.5} and can be described by the following system
\begin{equation}
  \label{eq:BWGPB}
  \begin{array}{ll}
    \left\{
      \begin{array}{ll}
         \partial_t \rho +  \partial_x (\rho V) = 0, &
        \textrm{ if } (\rho,q) \in \Omega_f,\vspace{.2cm} \\
        \left\{ 
          \begin{array}{l}
             \partial_t \rho +  \partial_x (\rho v(\rho,q)) = 0,\\
             \partial_t q +  \partial_x (q v(\rho,q)) = 0.
          \end{array}
        \right.
        & \textrm{ if } (\rho,q) \in \Omega_c,
    \end{array}   
    \right.
  \end{array}
\end{equation}
where $\rho$ and $q$, represent respectively the car traffic density
and the linearized momentum,
$v = v\left(\rho, q\right)$ is the average speed of cars,
while the sets $\Omega_f$ and $\Omega_c$,
respectively the free and the congested
phases, are given by
\begin{equation}
  \label{eq:phases_pg}
    \begin{array}{l}
      \Omega_f = \left\{
        (\rho, q) \in [0,R] \times [0, +\infty[ :
        \, q = \frac{R (\rho - \sigma)}{\sigma (R - \rho)},\,\,
        0 \le \rho \le \sigma_+
      \right\}\vspace{.2cm} \\
      \Omega_c = \left\{
        (\rho, q) \in [0,R] \times [0, +\infty[ :
        \, v(\rho,q) \le V, \,\,\frac{q^-}R \le \frac{q}{\rho} \le \frac{q^+}R
      \right\}\,.
    \end{array}
\end{equation}
In~(\ref{eq:phases_pg}), $R > 0$ denotes the maximal density,
$V > 0$ is the maximal velocity attained by the vehicles
in the free phase, $q^-$ and $q^+$ are respectively the minimum and
the maximum value of the momentum $q$, and the constants $\sigma$ and
$\sigma_+$ are suitable constants in $[0,R]$.
% The following relations hold
% \begin{equation}
%  \label{relations}
% -1 < q^- < 0 < q^+ < 1,\,\,\,\, \,\,\,\,\,\,\, \frac{R}{5} < \sigma < R\,.
% \end{equation}
The velocity $v: \Omega_f \cup \Omega_c  \to [0, +\infty[$ is defined by
\begin{equation}
  \label{eq:speed_pg}
  v (\rho, q) = \left\{ 
    \begin{array}{l@{\quad}l}
      V, & \textrm{ if } (\rho, q) \in \Omega_f
      \vspace{.2cm}\\
      v^{eq} (\rho) (1+ q)=
      \frac{V \sigma}{R - \sigma} \left(\frac{R}{\rho} -1
      \right) \left( 1+q\right), & \textrm{ if } (\rho, q) \in \Omega_c\,,
    \end{array}
  \right.
\end{equation}
where the \emph{equilibrium velocity} $v^{eq}$ represents
the desired speed of cars when the density is $\rho$. In the congested
phase $\Omega_c$, when $q = 0$, the velocity of cars is 
the equilibrium velocity $v^{eq}$.

The models~(\ref{eq:Modeleta}) and~(\ref{eq:BWGPB}) have some similarities.
Both models are described by two intersecting phases: the free phase
and the two-dimensional congested phase.
In Figures~\ref{fig:phases} and~\ref{fig:comparison_FD} the fundamental
diagrams, respectively for~(\ref{eq:Modeleta}) and for~(\ref{eq:BWGPB}),
are drawn.
Note however that the free phase in~(\ref{eq:BWGPB}) is one-dimensional 
both in the conserved quantity coordinates $\left(\rho, q\right)$
and in the coordinates $\left(\rho, \rho v\right)$.
In the model~(\ref{eq:Modeleta}) the free phase is two-dimensional
in the conserved quantity coordinates $\left(\rho, \eta\right)$.
\begin{figure}[t!]
  \centering
  \input{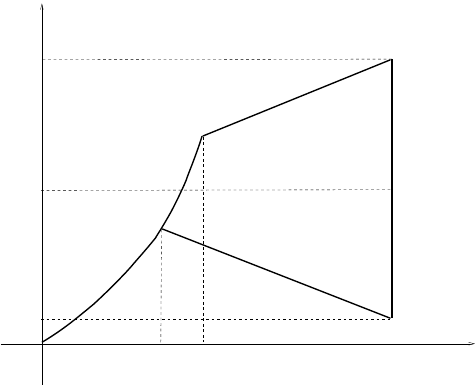tex_t}
  \hfil
  \input{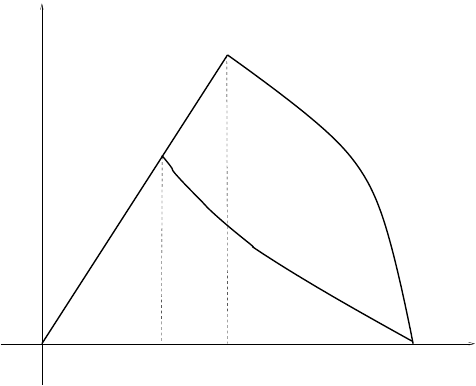tex_t}
  \caption{The fundamental diagram in~\cite{BlandinWorkGoatinPiccoliBayen}
    in the coordinates, from left to right, $(\rho,q)$ and $(\rho, \rho v)$.
    Note that free phase $\Omega_f$ is one-dimensional, while the congested
    phase $\Omega_c$ is two-dimensional.}
  \label{fig:comparison_FD}
\end{figure} 

The main difference between~(\ref{eq:Modeleta}) and~(\ref{eq:BWGPB})
lies in the solution of the Riemann problem, in particular
when the left state belongs to the free phase and
the right state to the congested one.

The following result for~(\ref{eq:Modeleta}) holds.
\begin{proposition}
  \label{prop:1}
  Consider the system~(\ref{eq:Modeleta}) and fix
  the states $(\rho^l,\eta^l) \in F$, $(\rho^r,\eta^r) \in C$.
  The Riemann problem for~(\ref{eq:Modeleta}) with
  the initial condition
  \begin{equation*}
    \left(\rho_0, \eta_0\right)(x) =
    \left\{
      \begin{array}{ll}
        (\rho^l,\eta^l) & \textrm{ if } x < 0
        \\
        (\rho^r,\eta^r) & \textrm{ if } x > 0
      \end{array}
    \right.
  \end{equation*}
  is solved with at most two waves.
\end{proposition}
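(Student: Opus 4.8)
The plan is to realize the solution explicitly as in case~(4) of Section~\ref{sse:RP}: a phase transition wave from $(\rho^l,\eta^l)$ to an intermediate state $(\rho^m,\eta^m)\in C$, followed by a wave of the second family from $(\rho^m,\eta^m)$ to $(\rho^r,\eta^r)$. Since the Riemann solution is known to be unique, it suffices to exhibit this two-wave pattern and check that it is admissible and self-similar; this proves the solution contains at most two waves, with fewer in the degenerate cases where $(\rho^m,\eta^m)$ coincides with $(\rho^l,\eta^l)$ or with $(\rho^r,\eta^r)$.

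First I would locate $(\rho^m,\eta^m)$. Writing $w^l=\eta^l/\rho^l$, the phase transition wave must preserve $w$, so $(\rho^m,\eta^m)$ lies on the line $\eta=w^l\rho$; being joined to $(\rho^r,\eta^r)$ by a wave of the second family, it must also lie on the $2$--Lax curve $\eta=\mathcal{L}_2(\rho;\rho^r,\eta^r)=\rho\, v(\rho^r,\eta^r)/\psi(\rho)$. Intersecting the two conditions yields $\eta^m=w^l\rho^m$ together with
\[
  \psi(\rho^m)=\frac{v(\rho^r,\eta^r)}{w^l}.
\]
Because $(\rho^r,\eta^r)\in C$ forces $v(\rho^r,\eta^r)\le\vmax$, and $\vmax<\check w\le w^l$ by \textbf{(H-1)}, the right-hand side lies in $[0,1)$, i.e.\ in the range of the continuous, non-increasing map $\psi$; hence $\rho^m$ is determined, and I would then verify $(\rho^m,\eta^m)\in C$ directly, since $v(\rho^m,\eta^m)=w^l\psi(\rho^m)=v(\rho^r,\eta^r)\le\vmax$ with $w^m=w^l\in[\check w,\hat w]$.

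With the intermediate state in hand, the two waves are identified by their definitions: the first joins $(\rho^l,\eta^l)\in F$ to $(\rho^m,\eta^m)\in C$ along constant $w$, hence is a phase transition wave, and the second joins two states of $C$ sharing the same speed $v$, hence is a wave of the second family (a contact, since $\nabla\lambda_2\cdot r_2=0$).

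The hard part will be checking that these two waves do not cross, i.e.\ that the phase transition wave is not faster than the contact. I would compute its speed from the Rankine--Hugoniot relation,
\[
  s=\frac{\rho^m\,v(\rho^r,\eta^r)-\rho^l\,\vmax}{\rho^m-\rho^l},
\]
noting that $v(\rho^r,\eta^r)\le\vmax\le w^l\psi(\rho^l)$ gives $\psi(\rho^m)\le\psi(\rho^l)$ and hence $\rho^m\ge\rho^l$, so the denominator is nonnegative. The desired inequality $s\le v(\rho^r,\eta^r)=\lambda_2$ then simplifies to $v(\rho^r,\eta^r)\le\vmax$, which holds. This ordering, together with the admissibility of each component wave, confirms that the construction yields a genuine self-similar weak solution made of at most two waves.
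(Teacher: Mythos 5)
Your proof is correct and follows essentially the same route as the paper: the paper's own proof simply invokes case~(4) of the Riemann solver recalled in Subsection~\ref{sse:RP} (a phase transition wave to a middle state $(\rho^m,\eta^m)\in C$, possibly followed by a wave of the second family), which is exactly the two-wave decomposition you construct. The difference is only one of detail, not of method: you explicitly verify what the paper delegates to the cited solver, namely the existence of the middle state via $\psi(\rho^m)=v(\rho^r,\eta^r)/w^l$ (guaranteed since $v(\rho^r,\eta^r)\le\vmax<\check w\le w^l$) and the ordering $s\le\lambda_2$ of the two wave speeds, both of which check out.
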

\begin{proof}
  Since $(\rho^l,\eta^l) \in F$ and $(\rho^r,\eta^r) \in C$, the solution
  to the Riemann problem consists of a shock wave
  connecting $(\rho^l, \eta^l)$ to a middle state $(\rho^m, \eta^m)$, 
  possibly followed by a wave of the second family
  connecting $(\rho^m, \eta^m)$ to $(\rho^r, \eta^r)$; see
  Subsection~\ref{sse:RP} and Figure~\ref{fig:comparison_RP}.
  This concludes the proof.
\end{proof}

The following result for~(\ref{eq:BWGPB}) holds.
\begin{proposition}
  \label{prop:2}
  Consider the system~(\ref{eq:BWGPB}) and fix
  the states $(\rho^l,q^l) \in \Omega_f$, $(\rho^r,q^r) \in \Omega_c$.
  The Riemann problem for~(\ref{eq:BWGPB}) with
  the initial condition
  \begin{equation*}
    \left(\rho_0, q_0\right)(x) =
    \left\{
      \begin{array}{ll}
        (\rho^l,q^l) & \textrm{ if } x < 0
        \\
        (\rho^r,q^r) & \textrm{ if } x > 0
      \end{array}
    \right.
  \end{equation*}
  is solved with at most three waves.
\end{proposition}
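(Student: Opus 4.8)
The plan is to follow the same strategy as in Proposition~\ref{prop:1}, but taking into account that~(\ref{eq:BWGPB}) assigns different dynamics to its two phases: on the one-dimensional free curve $\Omega_f$ the system reduces to the scalar advection $\partial_t\rho+\partial_x(\rho V)=0$, whereas on the two-dimensional congested region $\Omega_c$ it is a genuine $2\times 2$ system of Aw--Rascle--Zhang type, with a genuinely nonlinear first family (shocks and rarefactions) and a linearly degenerate second family (contact discontinuities, along which the velocity $v$ is constant). The first step is thus to recall these waves from~\cite{BlandinWorkGoatinPiccoliBayen}, together with the phase transition wave joining $\Omega_f$ to $\Omega_c$, whose speed and endpoints are fixed by the Rankine--Hugoniot conditions.

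I would then construct the solution by the Lax method, composing waves from left to right. Starting from the right state $(\rho^r,q^r)\in\Omega_c$, I trace backward the contact curve of the second family, the level set $\{v=v(\rho^r,q^r)\}$, which collects the states joinable to $(\rho^r,q^r)$ by a single $2$-wave. Starting from the left state $(\rho^l,q^l)\in\Omega_f$, pinned to the free curve in~(\ref{eq:phases_pg}), I follow the phase transition into $\Omega_c$, reaching a state $(\rho_a,q_a)$ determined by the left data. I then close the remaining gap with a single wave of the first family, taking the intermediate state where the forward $1$-curve issued from $(\rho_a,q_a)$ meets the backward contact curve through $(\rho^r,q^r)$. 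Counting gives at most one phase transition, one first-family wave, and one second-family wave, hence at most three waves; the qualifier ``at most'' covers the degenerate configurations in which some of these waves have zero strength.

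The contrast with Proposition~\ref{prop:1} is precisely this extra first-family wave, and it originates in the different nature of the free phase. In~(\ref{eq:Modeleta}) the system is $2\times 2$ on both phases and $F$ is two-dimensional, so the phase transition preserves $\eta/\rho$, exactly the invariant of the first family; the phase transition curve then coincides with a $1$-Lax curve, the phase transition plays the role of the first-family wave, and a single second-family wave closes the solution. In~(\ref{eq:BWGPB}) the free phase is scalar and one-dimensional, so the phase transition curve no longer coincides with a $1$-Lax curve of $\Omega_c$; the phase-transition target $(\rho_a,q_a)$ and the right state generically lie on different $1$-curves, and a separate first-family wave is therefore unavoidable.

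The step I expect to be the main obstacle is showing that these three waves glue into a genuine entropy-admissible self-similar solution. Concretely, one must verify that the intermediate state obtained by intersecting the forward $1$-curve through $(\rho_a,q_a)$ with the backward contact curve through $(\rho^r,q^r)$ exists and is unique, and that the three waves are ordered by nondecreasing speed, so that they occupy disjoint sectors of the $(t,x)$ plane. This is where the precise geometry of the fundamental diagram of~(\ref{eq:BWGPB}) and the sign of the first eigenvalue in $\Omega_c$ come into play, and it is established in~\cite{BlandinWorkGoatinPiccoliBayen}.
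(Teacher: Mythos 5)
Your overall strategy --- a Lax-type composition: phase transition, then a first-family wave, then a second-family contact --- is the right one in spirit, and your explanation of why an extra first-family wave appears relative to Proposition~\ref{prop:1} matches the paper. The genuine gap is that your fixed three-wave template, with a phase-transition endpoint $(\rho_a,q_a)$ ``determined by the left data,'' is not how the solution of~(\ref{eq:BWGPB}) is actually built, and it fails in one of the two cases the paper distinguishes. The paper's proof first defines the middle state $(\rho^m,q^m)$ by intersecting a fixed ray $\left\{ q/\rho = \mathrm{const}\right\}$ (which is a $1$-Lax curve of the congested system) with the contact curve $\left\{ v = v(\rho^r,q^r)\right\}$, and then compares the Rankine--Hugoniot speed $\Lambda$ of the jump $(\rho^l,q^l) \to (\rho^m,q^m)$ with $\lambda_1(\rho^m,q^m)$. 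If $\Lambda \geq \lambda_1(\rho^m,q^m)$, the phase transition connects \emph{directly} to $(\rho^m,q^m)$ and the solution has two waves; only if $\Lambda < \lambda_1(\rho^m,q^m)$ does one insert the state $(\rho^p,q^p)$ defined by the speed-matching condition $\Lambda\left((\rho^l,q^l),(\rho^p,q^p)\right) = \lambda_1(\rho^p,q^p)$, followed by a $1$-rarefaction up to $(\rho^m,q^m)$ and then the contact.

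Concretely, two of your statements fail. First, in the two-wave case the right endpoint of the phase transition is $(\rho^m,q^m)$ itself, which depends on the right datum through the velocity matching, so it is not ``determined by the left data.'' Second, if you instead always take $(\rho_a,q_a)$ to be the speed-matched state (the only left-determined choice that works in the three-wave case), then in the regime $\Lambda \geq \lambda_1(\rho^m,q^m)$ your closing $1$-wave is a Lax $1$-shock from $(\rho_a,q_a)$ to $(\rho^m,q^m)$, whose speed is strictly less than $\lambda_1(\rho_a,q_a)$, which in turn equals the speed of the phase transition to its left: the waves are out of order, and no self-similar solution has this form. The passage from three waves to two is therefore not a ``zero strength'' degeneration, as your use of ``at most'' suggests; it is a structural merging of the phase transition with the $1$-wave, detected exactly by the speed comparison that your proposal never performs and instead defers to~\cite{BlandinWorkGoatinPiccoliBayen}. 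Since that comparison (together with the construction of $(\rho^p,q^p)$) is the entire content of the paper's proof, the deferred ``obstacle'' is not a technicality but the proof itself.
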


\begin{proof}
  We use here the notations in~\cite{BlandinWorkGoatinPiccoliBayen}.
  First define the intermediate state
  $(\rho^m,q^m) \in \Omega_{c}$ as he solution to the system
  \begin{equation*}
    \left\{
      \begin{array}{l}
        \frac{q^{m}}{\rho^{m}}=\frac{q}{R}
        \\
        v_{c}(\rho^m,q^m)= v_{c}(\rho^r,q^r)\,.
      \end{array}
    \right.
  \end{equation*}
  Let $\Lambda\left((\rho^l,q^l),(\rho^m,q^m) \right)$ be
  the speed of the phase transition wave connecting $(\rho^l,q^l)$ to
  $(\rho^m,q^m)$. Its value is given by the Rankine-Hugoniot condition
  \begin{equation*}
    \Lambda\left((\rho^l,q^l),(\rho^m,q^m) \right) =
    \frac{\rho^m v\left(\rho^m, q^m\right) - \rho^l V}{\rho^m - \rho^l}.
  \end{equation*}
  Denoting with $\lambda_{1}(\rho^m,q^m)$ the first eigenvalue of the
  Jacobian matrix of the flux at $(\rho^m,q^m)$,
  the following possibilities hold.
  \begin{enumerate}
  \item $\Lambda\left((\rho^l,q^l),(\rho^m,q^m) \right)\geq
    \lambda_{1}(\rho^m,q^m)$.
    In this case, the solution to the Riemann problem
    consists of a phase transition wave connecting
    $(\rho^l,q^l)$ to $(\rho^m,q^m)$, possibly followed by a 
    contact discontinuity wave connecting 
    $(\rho^m,q^m)$ to $(\rho^r,q^r)$; see Figure~\ref{fig:comparison_RP}.
    
  \item $\Lambda\left((\rho^l,q^l),(\rho^m,q^m) \right) <
    \lambda_{1}(\rho^m,q^m)$. Define
    $(\rho^p,q^p) \in \Omega_c$ the solution to
    \begin{equation*}
      \left\{
        \begin{array}{l}
          \frac{q^{p}}{\rho^{p}}=\frac{q^-}{R}
          \\
          \Lambda\left((\rho^l,q^l),(\rho^p,q^p) \right) 
          = \lambda_{1}(\rho^p,q^p)\,.
        \end{array}
      \right.
    \end{equation*}
    The solution to the Riemann problem 
    consists of a phase transition wave connecting $(\rho^l,q^l)$ to
    $(\rho^p,q^p)$, of a rarefaction wave of the first
    family connecting $(\rho^p,q^p)$ to
    $(\rho^m,q^m)$, and of contact discontinuity wave connecting
    $(\rho^m,q^m)$ to $(\rho^r,q^r)$; see Figure~\ref{fig:comparison_RP}.
  \end{enumerate}
  The proof is so concluded.
  \begin{figure}[t!]
    \centering
    \input{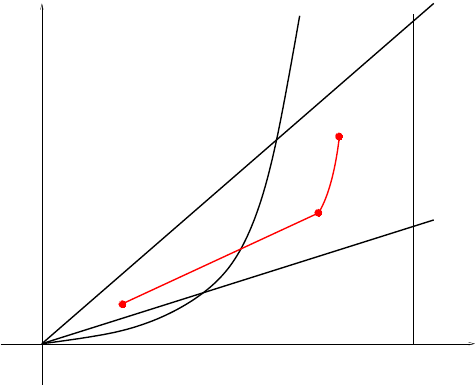tex_t}
    \hfil
    \input{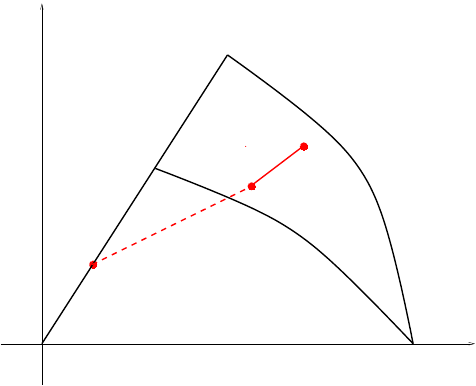tex_t}
    \hfil
    \input{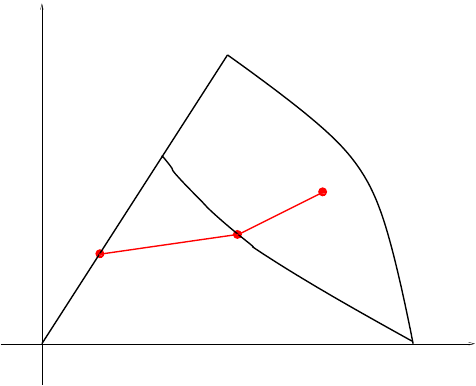tex_t}
    \hfil
    \input{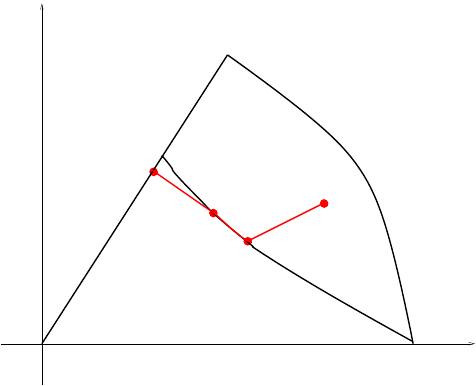tex_t}
    \caption{Above, the Riemann problem for~(\ref{eq:Modeleta})
      if $(\rho^l,\eta^l) \in F$ and $(\rho^r,\eta^r) \in C$, left, in the 
      $(\rho,\eta)$ coordinates, right, in the $(\rho,\rho v)$ coordinates.
      Below, the Riemann problem for~(\ref{eq:BWGPB})
      if  $(\rho^l,q^l)\in \Omega_{f}$ and  $(\rho^r,q^r)\in \Omega_{c}$,
      left, the first case in the proof of Proposition~\ref{prop:2},
      right the second case in the proof of Proposition~\ref{prop:2}.}
    \label{fig:comparison_RP}
  \end{figure} 
\end{proof}

%
% Godunov
%
\Section{The Godunov Method}
\label{sec:Godunov}

In this section we describe the Godunov method for the
model~(\ref{eq:Modeleta}) under the assumptions
\textbf{(H-1)}, \textbf{(H-2)}, and \textbf{(H-3)};
see~\cite{MR0119433, LeVeque, MR1925043}.
We recall that such numerical scheme is a finite volume method,
based on Riemann problems
within computational cells in order to obtain the numerical fluxes.

Consider the system~(\ref{eq:Modeleta}), with the conserved variables
$(\rho, \eta)$; see Figure~\ref{fig:phases}.
Introduce the space discretization $\Delta x$ and the time dicretization
$\Delta t$ satisfying the \textit{Courant-Friedrichs-Lewy}
conditions (see~\cite{MR1512478, MR0213764}) as in~\cite{LeVeque}.
Define $\nu=\Delta t/\Delta x$, and,
for all $j \in \interi$ and all $n \in \naturali$,
the points $x_{j+1/2} = j \Delta x$, $x_{j} = \left(j - 1/2\right) \Delta x$,
the time $t^{n} = n \Delta t$, and
the cell $C_{j}^{n} = \left\lbrace t^{n}\right\rbrace \times 
\left[ x_{j-1/2}, x_{j+1/2}\right]$ of length $\Delta x$.

The Godunov method consists in the following steps.
\begin{enumerate}
\item Approximate an initial condition $\left(\rho_0, \eta_0\right)$
  by a function
  $\left(\rho_\nu(0, \cdot), \eta_\nu(0, \cdot)\right)$, constant in each
  cell $C_j^0$. Define $u_j^0 = \left(\rho_j^0, \eta_j^0\right)
  = \left(\rho_\nu(0, x_j), \eta_\nu(0, x_j)\right)$ for every
  $j \in \interi$.

\item Construct, for every $n \in \naturali$ and $j \in \interi$,
  the values $u^{n+1}_j$ by the recurrence formula
  \begin{equation}
    \label{eq:NumRec}
    \begin{split}
      u^{n+1}_{j} & = \left(\rho_j^{n+1}, \eta_j^{n+1}\right) =
      u_{j}^{n} - \frac{\Delta t}{\Delta x}
      \left(\mathcal{F}^{n}_{j+1/2} -
        \mathcal{F}^{n}_{j-1/2}\right)
      \\
      & = \left(\rho_j^{n}, \eta_j^{n}\right)
      \! - \! \frac{\Delta t}{\Delta x}
      \left[\left({F}^{n}_{j+1/2}, {G}^{n}_{j+1/2}\right) \!-\!
        \left({F}^{n}_{j-1/2}, {G}^{n}_{j-1/2}\right)\right],
    \end{split}
  \end{equation}
  where $\mathcal{F}^{n}_{j+1/2} = \left({F}^{n}_{j+1/2}, {G}^{n}_{j+1/2}\right)$
  is a numerical flux at the interface
  $x_{j+1/2}$, obtained through the solution of corresponding Riemann problems.
\end{enumerate}
\begin{figure}[t!]
  \centering
  \input{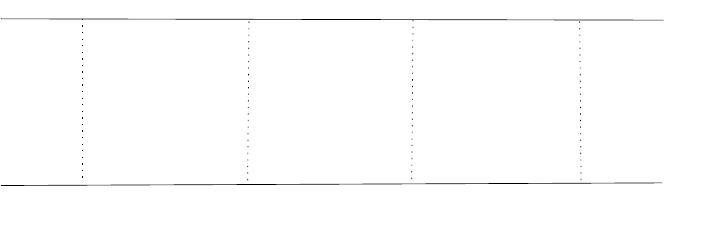tex_t}

  \caption{A cell given by $\left[x_{j-1/2}, x_{j+1/2}\right]$ in the Godunov
    Scheme.}
  \label{fig:cella}
\end{figure}
If $j \in \interi$, the numerical flux
$\mathcal{F}^{n}_{j+1/2} = \left(F^{n}_{j+1/2}, G^{n}_{j+1/2} \right)$
depends on the values $u^n_j$ and $u^n_{j+1}$. Namely, denoting for simplicity
with $u_l = \left(\rho_l, \eta_l\right)$ and $u_r = \left(\rho_r, \eta_r\right)$
respectively $u^n_j$ and $u^n_{j+1}$, we have
\begin{equation}
  \label{eq:FluFirst}
  F^{n}_{j+1/2} =\left\{
    \begin{array}{l@{\qquad}l}
      \rho_{l}\vmax  
      & \mbox{ if } \quad u_{l},u_{r} \in F
      \\
      \eta_{m}\psi(\rho_{m}) 
      & \mbox{ if } \quad u_{l},u_{r} \in C
      \\
      \rho_{\circ } \vmax  
      &  \mbox{ if } \quad u_{l}\in C, u_{r} \in F
      \\
      \eta_{m}\psi(\rho_{m}) 
      &  \mbox{ if } \quad u_{l}\in F,\, u_{r} \in C,\, \rho_{l}\vmax>\eta_{m}\psi(\rho_{m})
      \\
      \rho_{l}\vmax 
      & \mbox{ if } \quad u_{l}\in F,\, u_{r} \in C,\,  \rho_{l}\vmax<\eta_{m}\psi(\rho_{m})
       \end{array}
  \right.
\end{equation}
and
\begin{equation}
  \label{eq:FluSecond}
  G^{n}_{j+1/2} =\left\{
    \begin{array}{ll}
     \eta_{l}\vmax  
      & \mbox{ if } \quad u_{l},u_{r} \in F
      \\
      \frac{\eta_{m}^{2}}{\rho_{m}}\psi(\rho_{m}) 
      & \mbox{ if } \quad u_{l},u_{r} \in C
      \\
      \eta_{\circ }\vmax 
      & \mbox{ if } \quad u_{l}\in C,\, u_{r} \in F
      \\
      \frac{\eta_{m}^{2}}{\rho_{m}}\psi(\rho_{m}) 
      & \mbox{ if } \quad u_{l}\in F,\, u_{r} \in C,\,
        \rho_{l}\vmax>\eta_{m}\psi(\rho_{m})
      \\
      \eta_{l}\vmax 
      & \mbox{ if } \quad u_{l}\in F,\, u_{r} \in C,\, 
        \rho_{l}\vmax<\eta_{m}\psi(\rho_{m})\,.
       \end{array}
  \right.
\end{equation}
Above $(\rho_{m},\eta_{m}) \in C$ is the middle state in the solution to
the Riemann problem~(\ref{eq:RD}); see Subsection~\ref{sse:RP}.
Moreover the point $(\rho_{\circ},\eta_{\circ}) \in F \cap C$ is the middle point
in the solution to the Riemann problem~(\ref{eq:RD}) when
$u_l \in C$ and $u_r \in F$; see point (3) of Subsection~\ref{sse:RP}.
Note that the condition $\rho_{l}\vmax$ respectively greater or less than $\eta_{m}\psi(\rho_{m})$ means that, by the Rankine-Hugoniot condition, the phase transition wave connecting $(\rho_{l},\eta_{l})\in F$ and $(\rho_{m},\eta_{m})\in C$ has strictly negative or positive speed.

\subsection{Example: the case of a traffic light with increasing $w$}
\label{sse:ex1}
We present here a simple situation of a road with a traffic light, which turns
green at the initial time $t=0$.
At the beginning all the vehicles are stopped at the maximal density
and $w$ is linearly increasing between $\check w$ and $\hat w$.
We assume that the road is $3$Km long, modeled by the real 
interval $(0, 3000)$ and the traffic light is located at position $x = 500$.

We choose,  for the model~(\ref{eq:Modeleta}), the parameters
\begin{equation}
  \label{eq:parameters-es1}
  R = 1, \quad \vmax = 60\, \textrm{Km/h},
  \quad \check w=120\, \textrm{Km/h}, \quad \hat w = 140\, \textrm{Km/h}, 
\end{equation}
with $\psi(\rho) = 1 - \rho$ and initial conditions
\begin{equation}
  \label{eq:init-cond-es1}
  \rho_0(x) = \left\{
    \begin{array}{ll}
      1, 
      & x \le 500
      \\
      0,
      &
        x > 500
    \end{array}
  \right.
  \quad
  w_0(x) = \left\{
    \begin{array}{ll}
      \check w + \frac{\hat w - \check w}{500} x, 
      & x \le 500
      \\
      0,
      &
        x > 500.
    \end{array}
  \right.
\end{equation}
In Figure~\ref{fig:contour-es-1} we show the contour plot for the
numerical solution, obtained with the Godunov scheme, using the
following numerical parameters:
\begin{equation}
  \label{eq:num-par-es1}
  T = 300\, \textrm{s}, \quad \Delta x = 1\, \textrm{m}, \quad
  \Delta t = 0.7 \, \frac{\Delta x}{\vmax}.
\end{equation}

\begin{figure}
  \centering
  \includegraphics[width=0.45\linewidth]{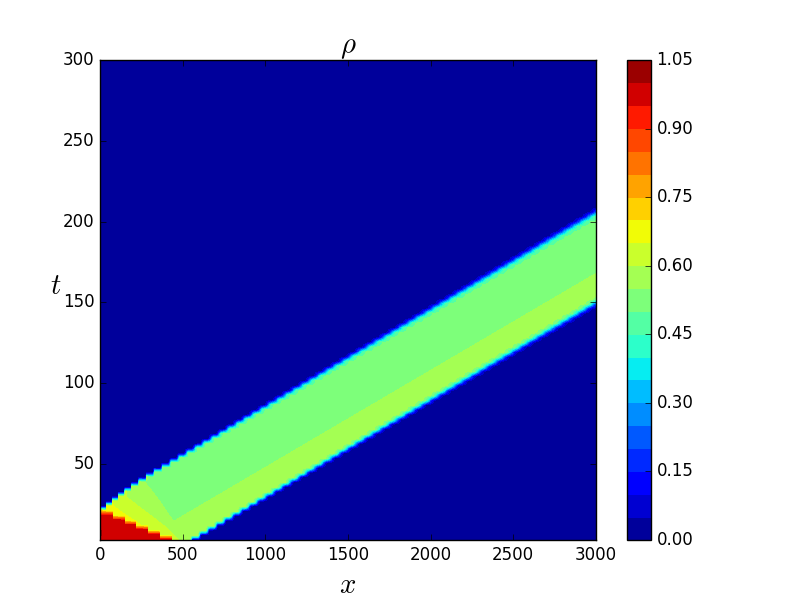}
  \includegraphics[width=0.45\linewidth]{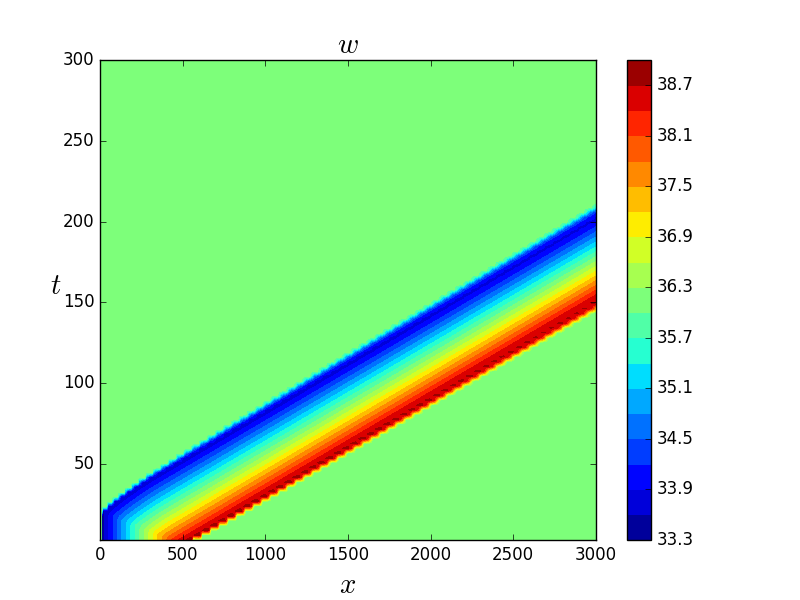}
  \caption{The contour plot for the density $\rho$ and for the 
  maximal speed $w$ obtained with the Godunov method, with
  the parameters~(\ref{eq:parameters-es1}) and~(\ref{eq:num-par-es1}),
  and with $\psi(\rho) = 1 - \rho$.
  The initial condition are in~(\ref{eq:init-cond-es1}).}
  \label{fig:contour-es-1}
\end{figure}

\subsection{Example: the case of a traffic light with decreasing $w$}
\label{sse:ex2}
We consider here a situation very similar to that of Subsection~\ref{sse:ex1}.
The only difference is that the initial $w$ is decreasing
between $\check w$ and $\hat w$.
As before, we assume that the road is $3$Km long, modeled by the real 
interval $(0, 3000)$ and the traffic light is located at position $x = 500$.

We choose,  for the model~(\ref{eq:Modeleta}),
the parameters~(\ref{eq:parameters-es1}), the function
$\psi(\rho) = 1 - \rho$, initial conditions
\begin{equation}
  \label{eq:init-cond-es2}
  \rho_0(x) = \left\{
    \begin{array}{ll}
      1, 
      & x \le 500
      \\
      0,
      &
        x > 500
    \end{array}
  \right.
  \quad
  w_0(x) = \left\{
    \begin{array}{ll}
      \hat w + \frac{\check w - \hat w}{500} x, 
      & x \le 500
      \\
      0,
      &
        x > 500.
    \end{array}
  \right.
\end{equation}
In Figure~\ref{fig:contour-es-2} we show the contour plot for the
numerical solution, obtained with the Godunov scheme, using the
numerical parameters~(\ref{eq:num-par-es1}).

\begin{figure}
  \centering
  \includegraphics[width=0.45\linewidth]{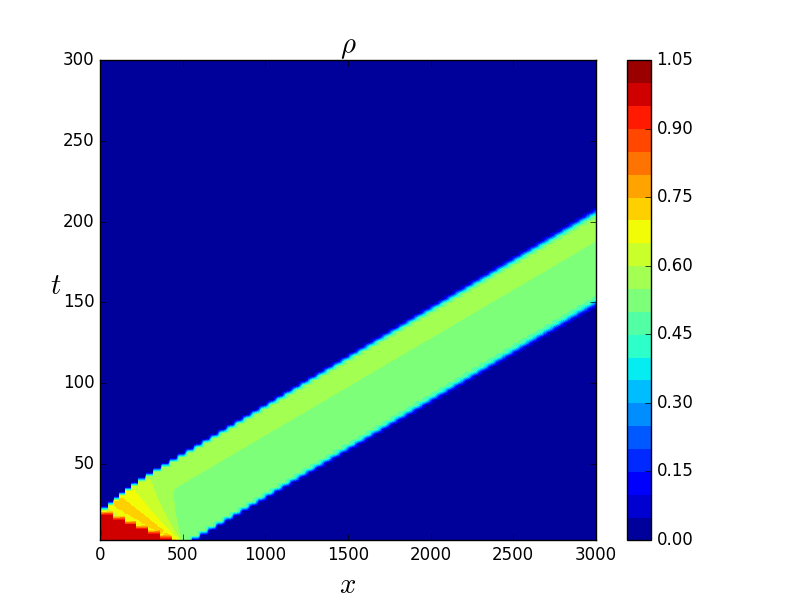}
  \includegraphics[width=0.45\linewidth]{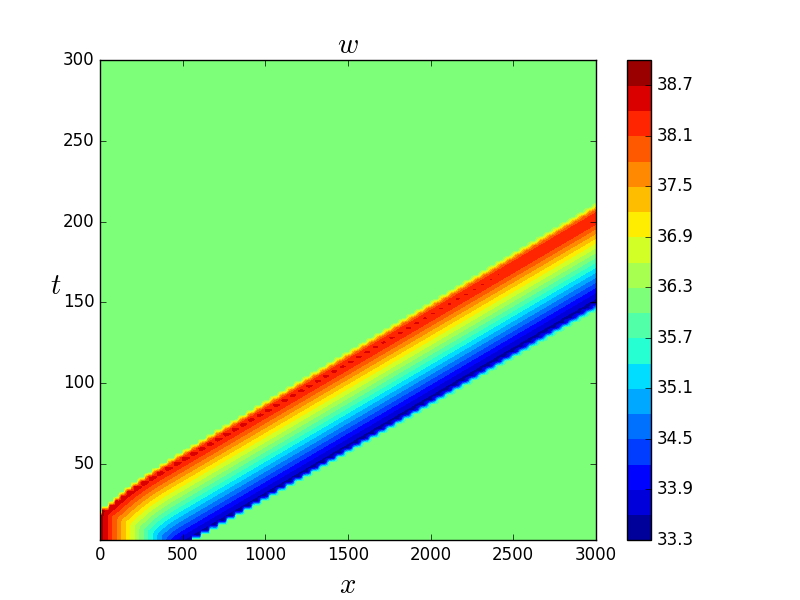}
  \caption{The contour plot for the density $\rho$ and for the 
  maximal speed $w$ obtained with the Godunov method, with
  the parameters~(\ref{eq:parameters-es1}) and~(\ref{eq:num-par-es1}),
  and with $\psi(\rho) = 1 - \rho$.
  The initial condition are in~(\ref{eq:init-cond-es2}).}
  \label{fig:contour-es-2}
\end{figure}

\subsection{Example: increasing $\rho$ and decreasing $w$}
\label{sse:ex3}
In this part we consider an example where the initial conditions are
monotone functions: the density $\rho$ is increasing, while the
maximal speed $w$ is decreasing.
We assume that the road is $3$Km long, modeled by the real 
interval $(0, 3000)$.

We choose the parameters~(\ref{eq:parameters-es1}), the function
$\psi(\rho) = 1 - \rho$, and the initial conditions
\begin{equation}
  \label{eq:init-cond-es3}
  \begin{split}
    \rho_0(x) & = \left\{
      \begin{array}{ll}
        0, 
        & x \le 500
        \\
        0.2 + 0.5\, \frac{x -500}{2000},
        & 500 < x < 2500
        \\
        0,
        &
          x > 2500
      \end{array}
    \right. 
    \\
    w_0(x) & = \left\{
      \begin{array}{ll}
        0,
        &
          x \le 500
        \\
        \hat w + \frac{\check w - \hat w}{2000} \left(x - 500\right), 
        & 500 < x < 2500
        \\
        0,
        &
          x \ge 2500.
      \end{array}
    \right.
  \end{split}
\end{equation}
In Figure~\ref{fig:contour-es-3} we show the contour plot for the
numerical solution, obtained with the Godunov scheme, using the
following numerical parameters:
\begin{equation}
  \label{eq:num-par-es3}
  T = 200\, \textrm{s}, \quad \Delta x = 1\, \textrm{m}, \quad
  \Delta t = 0.7 \, \frac{\Delta x}{\vmax}.
\end{equation}

\begin{figure}
  \centering
  \includegraphics[width=0.45\linewidth]{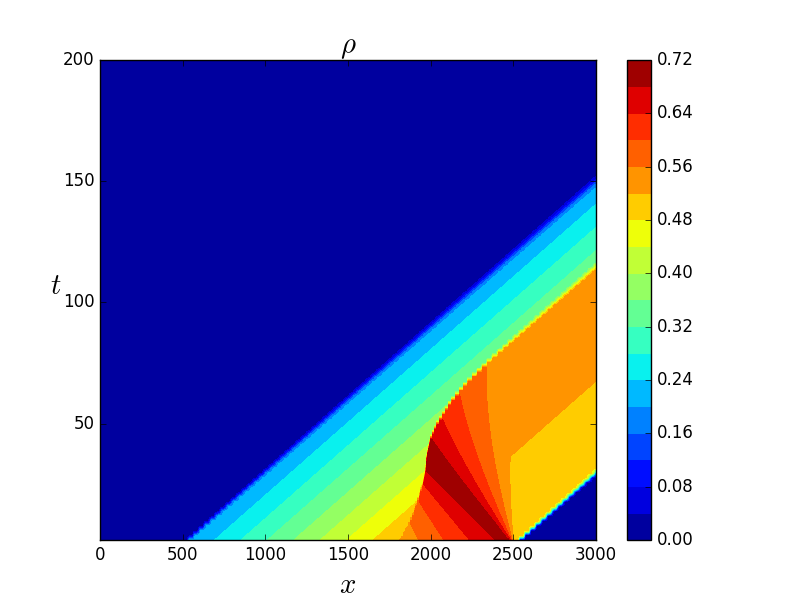}
  \includegraphics[width=0.45\linewidth]{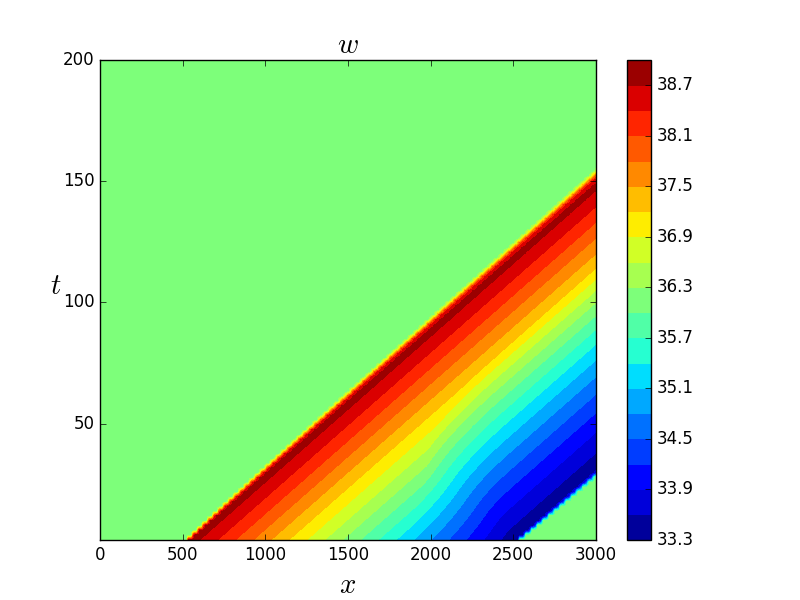}
  \caption{The contour plot for the density $\rho$ and for the 
  maximal speed $w$ obtained with the Godunov method, with
  the parameters~(\ref{eq:parameters-es1}) and~(\ref{eq:num-par-es3}),
  and with $\psi(\rho) = 1 - \rho$.
  The initial condition are in~(\ref{eq:init-cond-es3}).
  A shock curve is generated at about $x \sim 1800$ and interacts with
  a rarefaction curve generated at about $x \sim 2500$.}
  \label{fig:contour-es-3}
\end{figure}

\section*{Acknowledgments}
The authors were partial supported by the INdAM-GNAMPA 2016 project ``Balance Laws: Theory and Applications''.

{\small{

    \bibliographystyle{abbrv}

    \bibliography{model} }}
\end{document}